  \edef\mtht{\the\textheight}
  \edef\mtwd{\the\textwidth}
  \definecolor{BackgroundColor}{RGB}{253, 246, 227}
\tikzset{
  commutative diagrams/.cd, 
  arrow style=tikz, 
  diagrams={>=stealth}
}
\addspace\texttt{\mkbibbrackets{\thefield{arxivclass}}}}}}
\addspace\texttt{\mkbibbrackets{\thefield{arxivclass}}}}}}
\newcommand{\printreferences}{\printbibliography[heading=bibintoc]}
\ifundef{\abstract}{}{\patchcmd{\abstract}%
    {\quotation}{\quotation\noindent\ignorespaces}{}{}}
\numberwithin{equation}{section}
\renewcommand{\eqref}[1]{\hyperref[#1]{\rm(\ref*{#1})}}
\def\makeautorefname#1#2{\AtBeginDocument{\expandafter\def\csname#1autorefname\endcsname{#2}}}
\newcommand{\mynewtheorem}[2]{
  \newaliascnt{#1}{equation}          
  \newtheorem{#1}[#1]{#2}
  \aliascntresetthe{#1}
  \makeautorefname{#1}{#2}
}
\newtheorem*{axiom*}{Axiom}
\newtheorem*{theorem*}{Theorem}
\newtheorem*{prop*}{Proposition}
\newtheorem*{conjecture*}{Conjecture}
\numberwithin{substep}{step}
\numberwithin{subcase}{case}
\theoremstyle{remark}
\newtheorem*{remark*}{Remark}
\newtheorem*{convention*}{Convention}
\newtheorem*{conventions*}{Conventions}
\theoremstyle{remark}
\theoremstyle{definition}
\newtheorem*{definition*}{Definition}
\newtheorem*{example*}{Example}
\newtheorem*{question*}{Question}
\let\C\undefined
\let\U\undefined
\DeclareFontFamily{U}{mathx}{\hyphenchar\font45}
\DeclareFontShape{U}{mathx}{m}{n}{
      <5> <6> <7> <8> <9> <10>
      <10.95> <12> <14.4> <17.28> <20.74> <24.88>
      mathx10
      }{}
\DeclareSymbolFont{mathx}{U}{mathx}{m}{n}
\DeclareMathAccent{\widecheck}{0}{mathx}{"71}
\DeclareMathAccent{\wideparen}{0}{mathx}{"75}
\DeclareMathOperator{\Amp}{Amp}
\DeclareMathOperator{\Gr}{Gr}
\DeclareMathOperator{\HF}{\HF}
\DeclareMathOperator{\Pic}{Pic}
\DeclareMathOperator{\im}{im}
\DeclareMathOperator{\res}{res}
\DeclareMathOperator{\rk}{rk}
\DeclarePairedDelimiter{\abs}{\lvert}{\rvert}
\DeclarePairedDelimiter{\set}{\lbrace}{\rbrace}
\def\({\left(}
\def\){\right)}
\def\<{\left\langle}
\def\>{\right\rangle}
\newcommand{\C}{{\mathbf{C}}}
\newcommand{\Gtwo}{G_2}
\newcommand{\N}{{\mathbf{N}}}
\newcommand{\PU}{{\P\U}}
\newcommand{\Q}{\mathbf{Q}}
\newcommand{\R}{\mathbf{R}}
\newcommand{\SO}{\mathrm{SO}}
\newcommand{\Sch}{\textbf{Sch}}
\newcommand{\Set}{\textbf{Set}}
\newcommand{\U}{\mathrm{U}}
\newcommand{\Z}{\mathbf{Z}}
\newcommand{\andq}{\text{and}\quad}
\newcommand{\co}{\mskip0.5mu\colon\thinspace}
\newcommand{\defined}[2][\key]{\def\key{#2}\textbf{#2}\index{#1}}
\newcommand{\ev}{\mathrm{ev}}
\newcommand{\iso}{\cong}
\newcommand{\qandq}{\quad\text{and}\quad}
\newcommand{\sEnd}{\mathrm{\sE nd}}
\renewcommand{\O}{\mathrm{O}}
\renewcommand{\P}{\mathbf{P}}
\renewcommand{\emptyset}{\varnothing}
\renewcommand{\epsilon}{\varepsilon}
\renewcommand{\setminus}{{\backslash}}
\renewcommand{\leq}{\leqslant}
\renewcommand*\env@matrix[1][*\c@MaxMatrixCols c]{%
  \hskip -\arraycolsep
  \let\@ifnextchar\new@ifnextchar
  \array{#1}}
\renewcommand\xleftrightarrow[2][]{%
  \ext@arrow 9999{\longleftrightarrowfill@}{#1}{#2}}
\newcommand\longleftrightarrowfill@{%
  \arrowfill@\leftarrow\relbar\rightarrow}
\newcommand{\rd}{{\rm d}}
\newcommand{\bM}{{\mathbf{M}}}
\newcommand{\cZ}{\mathcal{Z}}
\newcommand{\sA}{\mathscr{A}}
\newcommand{\sE}{\mathscr{E}}
\newcommand{\sF}{\mathscr{F}}
\newcommand{\sL}{\mathscr{L}}
\newcommand{\sO}{\mathscr{O}}
\newcommand{\fg}{{\mathfrak g}}
\newcommand{\fr}{{\mathfrak r}}
\newcommand{\bomega}{{\bm{\omega{}}}}
\author{
  Thomas Walpuski
}
\title{$\Gtwo$--instantons over twisted connected sums: an example}
\date{2016-07-31}
\begin{document}

\maketitle

\begin{abstract}
  Using earlier work of Sá Earp and the author \cite{SaEarp2013} we construct an irreducible unobstructed $\Gtwo$--instanton on an $\SO(3)$--bundle over a twisted connected sum $\Gtwo$--manifold recently discovered by Crowley and Nordström \cite{Crowley2014}.
\end{abstract}

\paragraph{Changes to the published version}
A former version of this article has been published in \href{https://dx.doi.org/10.4310/MRL.2016.v23.n2.a11}{Mathematical Research Letters, Volume 23, Issue 2, pp. 529–544 (2016)}.
The present version is identical to the published article, except for aesthetic changes and the addition of a paragraph in \autoref{Sec_CNExample} justifying the application of \cite[Theorem 7.5]{Moishezon1967} in more detail.

\section{Introduction}
\label{Sec_Introduction}

In order to put this note into context and help the reader appreciate its significance, we (very) briefly recall some ideas from the study of gauge theory on $\Gtwo$--manifolds.

\begin{definition}
  A connection $A \in \sA(E)$ on a $G$--bundle $E$ over a $\Gtwo$--manifold $Y$ is called a \defined{$\Gtwo$--instanton} if its curvature satisfies
  \begin{equation}
    \label{Eq_G2Instanton}
    F_A \wedge \psi = 0
  \end{equation}
  with $\psi \coloneq *\phi$ and $\phi \in \Omega^3(Y)$ denoting the $\Gtwo$--structure on $Y$.
\end{definition}

In their visionary article \cite{Donaldson1998} Donaldson and Thomas speculated that ``counting'' $\Gtwo$--instantons might lead to an interesting enumerative invariant.
Although almost two decades have passed, it is still not understood what the precise definition of this invariant ought to be;
however, see Donaldson and Segal \cite{Donaldson2009}, the author \cites{Walpuski2013a,Walpuski2013}, and  Haydys and the author \cite{Haydys2014} for some recent progress.
What is clear, nonetheless, is that irreducible unobstructed $\Gtwo$--instantons should contribute with $\pm 1$ (depending on orientations).

\begin{definition}
  A $\Gtwo$--instanton $A \in \sA(E)$ is called \defined{irreducible} (\defined{unobstructed}) if the elliptic complex
  \begin{equation*}
    \Omega^0(Y,\fg_E) \xrightarrow{\rd_A}
    \Omega^1(Y,\fg_E) \xrightarrow{\psi\wedge\rd_A}
    \Omega^6(Y,\fg_E) \xrightarrow{\rd_A}
    \Omega^7(Y,\fg_E)
  \end{equation*}
  has vanishing cohomology in degree zero (one).
\end{definition}

In \cite{SaEarp2013} Sá Earp and the author developed a method for constructing irreducible unobstructed $\Gtwo$--instantons over twisted connected sums.
So far, however, we were unable to find a single instance of the input required for this construction.
This brief note is meant to ameliorate this disgraceful situation by showing that our method can be used to produce at least one example.

Let us briefly recall the twisted connected sum construction, a rich source of $\Gtwo$--manifolds, which was suggested by Donaldson, pioneered by Kovalev~\cite{Kovalev2003} and later extended and improved by Kovalev and Lee~\cite{Kovalev2011}, and Corti, Haskins, Nordström and Pacini~\cite{Corti2012a}.

\begin{definition}
  A \defined{building block} is a non-singular algebraic $3$--fold $Z$ together with a projective morphism $f\co Z \to \P^1$ such that:
  \begin{itemize}
  \item
    the anticanonical class $-K_Z \in H^2(Z)$ is primitive,
  \item
    $\Sigma \coloneq f^{-1}(\infty)$ is a smooth $K3$ surface and $\Sigma \sim -K_Z$.
  \end{itemize}
  A \defined{framing} of a building block $(Z,\Sigma)$ consists of a hyperkähler structure $\bomega = (\omega_I,\omega_J,\omega_K)$ on $\Sigma$ such that $\omega_J + i\omega_K$ is of type $(2,0)$ as well as a Kähler class on $Z$ whose restriction to $\Sigma$ is $[\omega_I]$.\footnote{%
    The existence of such a class is not guaranteed a priori.}
\end{definition}

For the purpose of this article we are mostly interested in the following class of building blocks introduced by Kovalev~\cite{Kovalev2003}. 

\begin{definition}
  A building block is said to be of \defined{Fano type} if it is obtained by blowing-up a Fano $3$--fold $W$ along the base locus of general pencil $|\Sigma_0,\Sigma_\infty| \subset |-K_W|$.
  (See \autoref{Sec_FanoTypeBuildingBlocks} for more details on this construction.)
\end{definition}

Given a framed building block $(Z,\Sigma,\bomega)$, using the work of Haskins, Hein and Nordström \cite{Haskins2012}, we can make $V \coloneq Z\setminus\Sigma$ into an asymptotically cylindrical (ACyl) Calabi--Yau $3$--fold with asymptotic cross-section $S^1 \times\Sigma$;
hence, $Y \coloneq S^1\times V$ is an ACyl $\Gtwo$--manifold with asymptotic cross-section $T^2\times\Sigma$.

\begin{definition}
  A \defined{matching} of pair of framed building blocks  $(Z_\pm,\Sigma_\pm,\bomega_\pm)$ is a hyperkähler rotation $\fr\co \Sigma_+ \to \Sigma_-$, i.e., a diffeomorphism such that
  \begin{equation*}
    \fr^*\omega_{I,-}=\omega_{J,+}, \quad
    \fr^*\omega_{J,-}=\omega_{I,+} \qandq
    \fr^*\omega_{K,-}=-\omega_{K,+}.
  \end{equation*}
\end{definition}

Given a matched pair of framed building blocks $(Z_\pm,\Sigma_\pm,\bomega_\pm;\fr)$, the twisted connected sum construction produces a simply-connected compact $7$--manifold $Y$ together with a family of torsion-free $\Gtwo$--structures $\set{\phi_T : T \gg 1 }$ by gluing truncations of $Y_\pm$ along their boundaries via interchanging the circle factors and $\fr$.

Sá Earp \cite{SaEarp2011} proved that given a holomorphic vector bundle $\sE$ on a building block with $\sE|_\Sigma$ $\mu$--stable,\footnote{Recall
  that a holomorphic bundle $\sE$ on a compact Kähler $n$--fold $(X,\omega)$ is \defined{$\mu$--(semi)stable} if for each torsion-free coherent subsheaf $\sF \subset \sE$ with $0 < \rk \sF < \rk \sE$ we have ($\mu(\sF) \leq \mu(\sE)$) $\mu(\sF) < \mu(\sE)$.  Here $\mu(\sE) \coloneq \<c_1(\sE)\cup [\omega]^{n-1},[X]\>/\rk \sE$ is the \defined{slope} of $\sE$ (and similarly for $\sF$).}
the smooth vector bundle underlying $\sE|_V$ can be equipped with a Hermitian Yang--Mills connection which is asymptotic at infinity to the anti-self-dual connection $A_\infty$ inducing the holomorphic structure on $\sE|_\Sigma$ \cite{Donaldson1985}.
Building on this, Sá Earp and the author \cite{SaEarp2013} developed a method for constructing $\Gtwo$--instantons over twisted connected sums provided a pair $\sE_\pm$ of such bundles and a lift $\bar\fr \co \sE_+|_{\Sigma_+} \to \sE_-|_{\Sigma_-}$ of the hyperkähler rotation $\fr$, which pulls back $A_{\infty,-}$ to $A_{\infty,+}$ (and assuming certain transversality conditions).
The following is a \emph{very special} case of the main result of \cite{SaEarp2013}.

\begin{theorem}
  \label{Thm_RigidExample}
  Let $(Z_\pm,\Sigma_\pm,\bomega_\pm;\fr)$ be a matched pair of framed building blocks.
  Denote by $Y$ the compact $7$--manifold and by $\set{ \phi_T : T \gg 1 }$ the family of torsion-free $\Gtwo$--structures obtained from the twisted connected sum construction.
  Let $\sE_\pm\to Z_\pm$ be a pair of rank $r$ holomorphic vector bundles such that the following hold:
  \begin{itemize}
  \item
    $c_1(\sE_+|_{\Sigma_+}) = \fr^*c_1(\sE_-|_{\Sigma_-})$ and $c_2(\sE_+|_{\Sigma_+}) = \fr^*c_2(\sE_-|_{\Sigma_-})$.
  \item
    $\sE_\pm|_{\Sigma_\pm}$ is $\mu$--stable with respect to $\omega_{I,\pm}$ and spherical, i.e.,
    \begin{equation*}
       H^*(\Sigma_\pm,\sEnd_0(\sE_\pm|_{\Sigma_\pm})) = 0.
    \end{equation*}
  \item
    $\sE_\pm$ is infinitesimally rigid:
    \begin{equation}
      \label{Eq_NoDeformations}
      H^1(Z_\pm,\sEnd_0(\sE_\pm))=0.
    \end{equation}
  \end{itemize}
  Then there exists a $\U(r)$--bundle $E$ over $Y$ with
  \begin{equation}
    \label{Eq_ChernClassesOfE}
    c_1(E) = \Upsilon(c_1(\sE_+),c_1(\sE_-)) \qandq
    c_2(E) = \Upsilon(c_2(\sE_+),c_2(\sE_-))
  \end{equation}
  and a family of connections $\{ A_T : T \gg 1\}$ on the associated $\PU(r)$--bundle with $A_T$ being an irreducible unobstructed $\Gtwo$--instanton over $(Y,\phi_T)$.
\end{theorem}

\begin{remark}
  The map 
  \textls[-15]{\begin{equation*}
    \Upsilon\co\set{ ([\alpha_+],[\alpha_-]) \in  H^\ev(Z_+)\times H^\ev(Z_-): [\alpha_+]|_{\Sigma_+} = \fr^*([\alpha_-]|_{\Sigma_-}) } \to H^\ev(Y)
  \end{equation*}
  is} 
  the natural patching map denoted by $Y$ in \cite[Definition 4.15]{Corti2012a}.
\end{remark}

Let $\res_\pm \co H^2(Z_\pm) \to H^2(\Sigma_\pm)$ denote the restriction maps associated with the inclusions $\Sigma_\pm \subset Z_\pm$ and set
\begin{equation*}
  N_\pm \coloneq \im \res_\pm.
\end{equation*}
If $\sE_\pm|_{\Sigma_\pm}$ is spherical, then $c_1(\sE_\pm|_{\Sigma_\pm})$ must be non-zero;
hence, \autoref{Thm_RigidExample} cannot be applied in situations where $N_+ \cap \fr^*N_- = 0$.
In particular, this rules out all the examples in \cites{Kovalev2003,Kovalev2011} as well as the mass-produced examples in \cite{Corti2012a}.
This means that the list of currently known $\Gtwo$--manifolds to which \autoref{Thm_RigidExample} could potentially be applied is relatively short.
Moreover, it has proved rather difficult to find suitable $\sE_\pm$.

Crowley and Nordström \cite{Crowley2014} systematically studied twisted connected sums of building blocks arising from Fano $3$--folds with Picard number two;
in particular, those that arise from matchings with $N_+ \cap \fr^*N_- \neq 0$.
This note shows that for one such twisted connected sum the hypotheses of \autoref{Thm_RigidExample} can be satisfied.

\begin{theorem}
  \label{Thm_Ex}
  There exists a twisted connected sum $Y$ of a pair of Fano type building blocks $(Z_\pm,\Sigma_\pm)$, arising from  \#13 and \#14 in Mori and Mukai's classification of Fano $3$--folds with Picard number two \cite[Table 2]{Mori1981}, admitting a pair of rank $2$ holomorphic vector bundles $\sE_\pm$ as required by \autoref{Thm_RigidExample}.
  In particular, each of the resulting twisted connected sums $(Y,\phi_T)$ with $T \gg 1$ carries an irreducible unobstructed $\Gtwo$--instanton on an $\SO(3)$--bundle.
\end{theorem}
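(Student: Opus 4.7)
First, I would follow Crowley--Nordström: starting from the Picard rank 2 Fano $3$-folds $X_\pm$ of Mori--Mukai types \#13 and \#14, form the building blocks $Z_\pm \to \P^1$ by blowing up the base locus of a generic anticanonical pencil, with a smooth anticanonical K3 $\Sigma_\pm$ as the fibre over $\infty$. The framings $\bomega_\pm$ come from choosing Kähler classes on the $X_\pm$, and the hyperkähler rotation $\fr\co \Sigma_+ \to \Sigma_-$ realising a non-perpendicular matching is already produced in \cite{Crowley2014} by lattice computations combined with the global Torelli theorem.

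Next, I would look for a pair of rank $2$ holomorphic vector bundles $\sE_\pm$ arising from natural geometric data on $X_\pm$---the most flexible source being the Hartshorne--Serre correspondence applied to a codimension $2$ subscheme of $X_\pm$ (for example, an elliptic curve) whose existence is supported by the projective geometry. A spherical rank $2$ bundle on a K3 satisfies the Mukai-vector identity $4c_2 - c_1^2 = -6$, so the invariants of the chosen subscheme must be arranged so that $\sE_\pm|_{\Sigma_\pm}$ satisfies this identity and so that the Mukai vectors $v(\sE_+|_{\Sigma_+})$ and $v(\sE_-|_{\Sigma_-})$ are exchanged by $\fr^*$. A spherical simple bundle on a K3 with primitive Mukai vector is unique up to isomorphism in its moduli, so agreement of Mukai vectors automatically produces a lift $\bar\fr$ of $\fr$ to the bundles, while matching of the integral Chern classes on $Y$ reduces to lattice arithmetic inside the sublattice $N_+ \cap \fr^*N_- \subset H^2(Y)$ identified in \cite{Corti2012a}*{Theorem 4.9}.

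Finally, I would verify the three hypotheses of \autoref{Thm_Gluing}. The $\mu$-stability and sphericality of $\sE_\pm|_{\Sigma_\pm}$ would be established by identifying the restriction with a distinguished bundle on the polarized K3 and computing $H^\bullet(\Sigma_\pm,\sEnd_0(\sE_\pm|_{\Sigma_\pm}))$ directly from the construction. For infinitesimal rigidity $H^1(Z_\pm,\sEnd_0(\sE_\pm))=0$, the plan is to exploit the short exact sequence
\begin{equation*}
  0 \to \sEnd_0(\sE_\pm)(-\Sigma_\pm) \to \sEnd_0(\sE_\pm) \to \sEnd_0(\sE_\pm|_{\Sigma_\pm}) \to 0,
\end{equation*}
whose right-hand cohomology vanishes by sphericality, reducing the problem to a Kodaira-type vanishing on the $3$-fold for the negatively twisted term. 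The latter I would analyse via the Leray spectral sequence for $f\co Z_\pm \to \P^1$, or, when convenient, by pulling the computation back to $X_\pm$ using the blow-up formula.

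The main obstacle---and the bulk of the argument---is finding explicit $\sE_\pm$ that are simultaneously rigid on $Z_\pm$, spherical and $\mu$-stable on $\Sigma_\pm$, and whose Chern classes are matched by the particular hyperkähler rotation $\fr$ that Crowley--Nordström single out for the pair \#13/\#14. Both the geometric choice of $\sE_\pm$ (the right Hartshorne--Serre input, or equivalently the right Mukai vector supported by the polarized K3) and the verification of the vanishing \eqref{Eq_NoDeformations} for that choice are the delicate parts; everything else is essentially bookkeeping once a suitable bundle is identified.
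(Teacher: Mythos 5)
Your proposal has the right skeleton (build the Crowley--Nordström matching, find rank~$2$ bundles with $\fr$--compatible Chern classes, check stability, sphericality and rigidity), but it defers precisely the step that constitutes the paper's proof: the actual production of $\sE_\pm$. The paper does \emph{not} construct the bundles geometrically via Hartshorne--Serre on the Fanos. Instead it works fibrewise and then spreads out: the matching is arranged so that $R = N_+\cap N_- = (-30)$ is generated by $B_\pm$ with $B_\pm^2=-30$ and the polarisations are $k_\pm = A_\pm \perp B_\pm$; one then takes $(r,c_1,c_2)=(2,B_\pm,-6)$, which solves the sphericality equation $2rc_2-(r-1)c_1^2-2(r^2-1)=0$. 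Kuleshov's theorem gives a spherical $\mu$--semistable bundle on each smooth fibre $\Sigma_{\pm,b}$ with these invariants; $\mu$--stability follows from the discriminant bound because every nonzero class in $\Pic(\Sigma_{\pm,b})\cap A_\pm^\perp$ is a multiple of $B_\pm$ and hence has square $\le -30 < -6$; Mukai's theorem makes the fibrewise moduli space a single reduced point; and Thomas's relative-moduli argument (\autoref{Prop_SphericalBundlesK3Fibrations}, using $\gcd(2,\tfrac12 c_1^2-c_2)=\gcd(2,-9)=1$ for the universal sheaf, and the RDP-fibration hypothesis plus \autoref{Thm_Thomas} to control singular fibres) produces a unique locally free $\sE_\pm$ on all of $Z_\pm$. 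None of this lattice-theoretic and moduli-theoretic input appears in your plan, and "the right Hartshorne--Serre input" is exactly the open problem your proof would need to solve.

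A second concrete issue is your route to \eqref{Eq_NoDeformations}. Since $\Sigma_\pm\sim -K_{Z_\pm}$, the twisted term in your restriction sequence is $\sEnd_0(\sE_\pm)\otimes K_{Z_\pm}$, whose $H^1$ is Serre-dual to $H^2(Z_\pm,\sEnd_0(\sE_\pm))$; there is no positivity available for a Kodaira-type vanishing, so this reduction is essentially circular. The paper instead obtains the much stronger statement $H^\bullet(Z_\pm,\sEnd_0(\sE_\pm))=0$ directly from the Leray/Grothendieck spectral sequence for $f_\pm$, because $H^\bullet(\Sigma_{\pm,b},\sEnd_0(\sE_\pm|_{\Sigma_{\pm,b}}))=0$ for \emph{every} fibre $b$ --- including the singular ones, which is why \autoref{Thm_Thomas} on RDP fibres is needed. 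Your Leray remark points in this direction, but without fibrewise sphericality on all fibres (which your Hartshorne--Serre bundle would not automatically have) the spectral sequence gives nothing.
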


\begin{remark}
  In earlier work \cite{Walpuski2011} the author constructed examples of irreducible unobstructed $\Gtwo$--instantons over $\Gtwo$--manifolds arising from Joyce's generalised Kummer construction \cites{Joyce1996,Joyce1996a}.
  To the author's best knowledge, \autoref{Thm_Ex} provides the first example of an irreducible unobstructed $\Gtwo$--instanton over a twisted connected sum.
\end{remark}

The method of proof relies mostly on certain arithmetic properties enjoyed by the twisted connected sum listed as \cite[Table 4, Line 16]{Crowley2014} by Crowley and Nordström.
A more abstract existence theorem is stated as \autoref{Thm_Abstract}.
It is an interesting question to ask whether there are any further twisted connected sums to which this result can be applied.

Finally, it should be pointed out that there is a very recent preprint by Menet, Nordström and Sá Earp \cite{Menet2015} in which they use the more general main result of \cite{SaEarp2013} to construct one $\Gtwo$--instanton.


\paragraph{Acknowledgements}
The author is grateful to Johannes Nordström for pointing out the $\Gtwo$--manifold constructed in \cite{Crowley2014} on which the above example lives;
moreover, he thanks the anonymous referees for thoughtful comments and suggestions.
The author gratefully acknowledges support from the Simons Center for Geometry and Physics, Stony Brook University at which part of the research for this paper was carried out.

\section{The twisted connected sum}
\label{Sec_TCS}

In this section we provide further details on Fano type building blocks, explain how to construct matching pairs of framed building blocks and describe the twisted connected sum mentioned in \autoref{Thm_Ex}.

\subsection{Building blocks of Fano type}
\label{Sec_FanoTypeBuildingBlocks}

If $W$ is a Fano $3$--fold, then according to Shokurov \cite{Shokurov1979} a general divisor $\Sigma \in |-K_W|$ is a smooth K3 surface.
Given a general pencil $|\Sigma_0,\Sigma_\infty| \subset |-K_W|$, blowing-up its base locus yields a smooth $3$--fold $Z$ together with a base-point free anti-canonical pencil spanned by the proper transforms of $\Sigma_0$ and $\Sigma_\infty$.
The resulting projective morphism $f\co Z \to \P^1$ makes $(Z,\Sigma_\infty)$ into a building block with
\begin{equation}
  \label{Eq_N}
  N \coloneq \im\(\res\co H^2(Z) \to H^2(\Sigma)\) \iso \Pic(W),
\end{equation}
see \cite[Proposition 6.42]{Kovalev2003} and \cite[Proposition 3.15]{Corti2012a}.

\citet[Theorem 7.5]{Moishezon1967} showed that if $-K_W$ is very ample for a very general\footnote{\label{Footnote_VeryGeneral}Here \defined{very general} means that the set of $\Sigma \in |-K_W|$ not satisfying the asserted condition is a countable union of complex analytic submanifolds of positive codimension in $\P H^0(-K_W)$.} $\Sigma \in |-K_W|$ we have  $\Pic(\Sigma) = \Pic(W)$.
Moreover, according to \citet[Proposition 2.14]{Kovalev2009} (see also, \citet[Corollary 2.10]{Voisin2007}) we can assume that $f\co Z \to \P^1$ is a \defined{rational double point (RDP) $K3$ fibration}, by which we mean that it has at only finitely many singular fibres and the singular fibres have only RDP singularities.
(In fact, Kovalev asserts that generically the singular fibres have only ordinary double points.)

\subsection{Matching building blocks}
\label{Sec_Matching}

Fix a lattice $L$ which is isomorphic to $(H^2(\Sigma),\cup)$ for $\Sigma$ a $K3$ surface.
Using the Torelli theorem and Yau's solution to the Calabi conjecture, \citet[section 6]{Corti2012a} showed that a set of framings of a pair of building blocks $Z_\pm$ together with a matching is equivalent (up to the action of $\O(L)$) to lattice isomorphisms $h_\pm \co L \to H^2(\Sigma_\pm)$ and an orthonormal triple $(k_+,k_-,k_0)$ of positive classes in $L_\R \coloneq L \otimes_\Z \R$ with $h_\pm(k_\pm)$ the restriction of a Kähler class on $Z_\pm$ and $\<k_\mp,\pm k_0\>$ the period point of $(\Sigma_\pm,h_\pm)$.
(The corresponding framings have $[\omega_{I,\pm}] = h_\pm(k_\pm)$ and the matching is such that $\fr^* = h_+\circ h_-^{-1}$.)
The following definition is useful to further simplify the matching problem.

\begin{definition}
  \label{Def_AmpGeneric}  
  Let $\cZ$ be a family of building blocks with constant $N$ and a fixed primitive isometric embedding $N \subset L$.
  Let $\Amp$ be an open subcone of the positive cone in $N_\R$.
  $\cZ$ is called \defined{$(N,\Amp)$--generic} if there exists a subset $U_\cZ\subset D_N \coloneq \set{ \Pi \in \P(N^\perp_\C) : \Pi\, \bar\Pi > 0 }$ with complement a countable union of complex analytic submanifolds of positive codimension and with the property that for any $\Pi\in U_\cZ$ and $k\in\Amp$ there exists a $(Z,\Sigma) \in \cZ$ and a marking $h\co L\to H^2(\Sigma)$ such that $\Pi$ is the period point of $(\Sigma,h)$ and $h(k)$ is the restriction to $\Sigma$ of a Kähler class on $Z$.
\end{definition}

This definition slightly deviates from \cite[Definition 6.17]{Corti2012a}.
There it is required that the complement of $U_\cZ$ is a \emph{locally finite} union of complex analytic submanifolds of positive codimension.
The above slightly weaker condition still suffices for the proof of the next proposition to carry over verbatim.

\begin{prop}[{\cite[Proposition 6.18]{Corti2012a}}]
  \label{Prop_Matching}
  Let $N_\pm\subset L$ be a pair of primitive sublattices of signature $(1,r_\pm-1)$ and let $\cZ_\pm$ be a pair of $(N_\pm,\Amp_\pm)$--generic families of building blocks.
  Suppose that $W \coloneq N_+ + N_-$ is an orthogonal pushout.\footnote{This
    means that $W_\R = W_{+,\R} \oplus W_{-,\R} \oplus (N_{+,\R} \cap N_{-,\R})$.
  }
  Set $T_\pm\coloneq N_\pm^\perp$ and $W_\pm \coloneq N_\pm \cap T_\mp$.
  If
  \begin{equation*}
    \Amp_\pm \cap W_\pm \neq \emptyset,
  \end{equation*}
  then there exist $(Z_\pm,\Sigma_\pm)\in\cZ_\pm$, markings $h_\pm\co L \to H^2(\Sigma_\pm)$ compatible with the given embeddings $N_\pm \subset L$ and an orthonormal triple $(k_+,k_-,k_0)$ of positive classes in $L_\R$ with:
  \begin{itemize}
  \item
    $k_\pm \in \Amp_\pm \cap W_{\pm,\R}$ and $k_0 \in W^\perp$,
  \item
    $h_\pm(k_\pm)$ the restriction of a Kähler class on $Z_\pm$, and
  \item
    $\<k_\mp,\pm k_0\>$ the period point of $(\Sigma_\pm,h_\pm)$.
  \end{itemize}
\end{prop}

If $\cZ$ is a family of building blocks arising from a full deformation type of Fano $3$--folds, then we can always find an open subcone $\Amp$ of the positive cone such that $\cZ$ is $(N,\Amp)$--generic \cite[Proposition 6.9]{Corti2012}.
(Also \autoref{Def_AmpGeneric} allows to slightly shrink $\cZ$ from a full deformation type by imposing very general conditions in the sense of \autoref{Footnote_VeryGeneral}.)
This reduces finding a matching of a pair such families of building blocks to the arithmetic problem of embedding $N_\pm$ into $L$ compatible with \autoref{Prop_Matching}.

\subsection{An example due to Crowley and Nordström}
\label{Sec_CNExample}

We will now describe the twisted connected sum found by \citet[Table 4, Line 16]{Crowley2014} which we referred to in \autoref{Thm_Ex}.

Consider the following pair of Fano $3$--folds:
\begin{itemize}
\item 
  Denote by $Q \subset \P^4$ a smooth quadric.
  Let $W_+ \to Q$ denote the blow-up of $Q$ in a degree $6$ genus $2$ curve \cite[Table 2, \#13]{Mori1981}.
\item
  Denote by $V_5$ a section of the Plücker-embedded Grassmannian $\Gr(2, 5) \subset \P^9$ by a subspace of codimension $3$.
  Let $W_- \to V_5$ denote the blow-up of $V_5$ in a elliptic curve that is the intersection of two hyperplane sections \cite[Table 2, \#14]{Mori1981}.
\end{itemize}
The anticanonical divisors $-K_{W_\pm}$ both are very ample.
To see this note that by according to \cite[Section 1]{Iskovskih1978} if $W$ is an index $r$ Fano $3$--fold and $-K_W$ is not very ample, then either $\abs{-\frac{1}{r}K_W}$ has a base point or $W$ is hyperelliptic.
According to \cite[Remarks preceding Table 12.3]{Iskovskih1999} neither is the case for the Fano $3$--folds under consideration;
see also  \cite[Theorem 2.1.16 and Theorem 2.4.5]{Iskovskih1999}.

$W_\pm$ both have Picard number $2$ with $\Pic(W_\pm)$ generated by $H_\pm$, the pullback of a generator of $\Pic(Q)$ and $\Pic(V_5)$ respectively, and the exceptional divisor $E_\pm$.
With respect to the bases $(H_\pm,E_\pm)$ the intersection forms on $N_\pm = \Pic(W_\pm)$, see \eqref{Eq_N}, can be written as
\begin{equation*} 
  \begin{pmatrix}
    6 & 6 \\
    6 & 2
  \end{pmatrix}
  \qandq
  \begin{pmatrix}
    10 & 5 \\
    5 & 0
  \end{pmatrix}
\end{equation*}
respectively.

$N_\pm$ can be thought of as the overlattices $\Z^2 + \frac15(3,-1)\Z$ and $\Z^2 + \frac16(1,1)\Z$ of $\Z^2$, generated by
\begin{equation}
  \label{Eq_AB}
  \begin{split}
    A_+=3H_+-E_+ \qandq B_+=4H_+-3E_+, \\ \andq 
    A_-= 3H_- - 2E_- \qandq B_- = 3H_- - 4E_-,
  \end{split}
\end{equation}
with intersection forms
\begin{equation*}
  \begin{pmatrix}
    20 & 0  \\
    0 & -30
  \end{pmatrix}
  \qandq
  \begin{pmatrix}
    30 & 0 \\
    0 & -30
  \end{pmatrix}
\end{equation*}
respectively.
The overlattice $W \coloneq \Z^3 + \frac15(3,0,-1)\Z + \frac16(0,
1,1)\Z$ of $\Z^3$ with intersection form
\begin{equation*}
  \begin{pmatrix}
    20 & 0 & 0 \\
    0 & 30 & 0 \\
    0 & 0 & -30
  \end{pmatrix}
\end{equation*}
is an orthogonal pushout of $N_\pm$ along $R = N_+ \cap N_- = (-30)$.

By Nikulin \cite[Theorem 1.12.4 and Corollary 1.12.3]{Nikulin1979} the lattice $W$ (and thus also $N_\pm$) can be embedded primitively into $L$.
Since we can choose $\Amp_\pm$ such that $\Amp_\pm \cap W_\pm$ is spanned by $A_\pm$, \autoref{Prop_Matching} yields matching data with $k_\pm$ a multiple of $A_\pm$ for a pair of building blocks $(Z_\pm,\Sigma_\pm)$ of Fano type arising from $W_\pm$.
Moreover, the resulting matching $\fr$ is such that $B_+ = \fr^*B_-$ (which generates $N_+ \cap \fr^*N_-$).
By the discussion at the end of \autoref{Sec_FanoTypeBuildingBlocks} we may assume that for all but countably many $b \in \P^1$ the fibre $\Sigma_{\pm,b} \coloneq f_\pm^{-1}(b)$ satisfies $\Pic(\Sigma_{\pm,b}) = N_\pm$;
in particular, we may assume that this holds for $\Sigma_\pm = \Sigma_{\pm,\infty}$.
Moreover, we may assume that $f_\pm\co Z_\pm \to \P^1$ is an RDP $K3$ fibration.


\section{Bundles on the building blocks}
\label{Sec_Bundles}

We will now construct holomorphic vector bundles $\sE_\pm$ over the building blocks $Z_\pm$ such that the hypotheses of \autoref{Thm_RigidExample} are satisfied.

The following theorem provides a spherical $\mu$--semistable vector bundle $\sE_{\pm,b}$ with
\begin{equation}
  \label{Eq_ChosenChernClasses}
  \rk \sE_{\pm,b} = 2, \quad
  c_1(\sE_{\pm,b}) = B_\pm \qandq
  c_2(\sE_{\pm,b}) = -6
\end{equation}
with $B_\pm$ as in \eqref{Eq_AB} on each non-singular fibre $\Sigma_{\pm,b} \coloneq f_\pm^{-1}(b)$.

\begin{theorem}[{\citet[Theorem~2.1]{Kuleshov1990}}]
  \label{Thm_Kuleshov}
  Let $(\Sigma, A)$ be a polarised smooth $K3$ surface.
  If $(r,c_1,c_2) \in \N\times H^{1,1}(\Sigma,\Z)\times\Z$ are such that
  \begin{equation}
    \label{Eq_Arithmetic}
    2rc_2 - (r-1) c_1^2 - 2(r^2-1) = 0,%
    \footnote{%
      Here and in the following, for $x \in H^2(\Sigma)$, we write $x^2 \in \Z$ to denote $x \cup x \in H^4(\Sigma) \iso \Z$.}
  \end{equation}
  then there exists a spherical $\mu$--semistable vector bundle $\sE$ on $\Sigma$ with
  \begin{equation*}
    \rk\sE = r, \quad c_1(\sE) = c_1 \qandq c_2(\sE) = c_2.
  \end{equation*}
\end{theorem}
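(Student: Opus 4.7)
The plan is to recast the arithmetic hypothesis in the language of Mukai vectors and then to induct on the rank. A direct Riemann--Roch computation shows that the Mukai vector $v(\sE) = (r, c_1, r + \tfrac12 c_1^2 - c_2)$ satisfies $\langle v,v\rangle = 2rc_2 - (r-1)c_1^2 - 2r^2$ under the Mukai pairing, so \eqref{Eq_Arithmetic} is exactly the \emph{sphericity condition} $\langle v,v\rangle = -2$. For any simple locally free sheaf $\sE$ with this Mukai vector, Serre duality yields $\chi(\sEnd_0(\sE)) = 0$ together with $h^0(\sEnd_0(\sE)) = h^2(\sEnd_0(\sE)) = 0$, forcing $h^1(\sEnd_0(\sE)) = 0$. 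Thus being spherical and being simple are equivalent in this setting, and the problem reduces to producing a $\mu$-stable vector bundle with Mukai vector $v$.

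The base case $r = 1$ is immediate: any line bundle $L$ with $c_1(L) = c_1$ is $\mu$-stable, and $\langle v(L), v(L)\rangle = -2$ holds identically on a $K3$ surface. For the inductive step, given $v$ of rank $r \geq 2$ with $\langle v,v\rangle = -2$, I would choose a line bundle $\sL$ on $\Sigma$ for which $n := -\langle v(\sL), v\rangle$ is positive and the reflected Mukai vector
\begin{equation*}
  v' := v - n \cdot v(\sL)
\end{equation*}
has strictly smaller rank $r - n$. That such an $\sL$ exists, after possibly normalising $\sE$ by an initial twist, follows from the arithmetic of the Mukai lattice together with $\langle v,v\rangle = -2$. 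The inductive hypothesis then supplies a spherical $\mu$-stable bundle $\sF$ with $v(\sF) = v'$, and the evaluation map attached to a basis of $\Hom(\sL,\sF)$ produces a short exact sequence
\begin{equation*}
  0 \longrightarrow \sL^{\oplus n} \longrightarrow \sE \longrightarrow \sF \longrightarrow 0
\end{equation*}
whose middle term $\sE$ has Mukai vector $v$.

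The main obstacle is to show that this $\sE$ is genuinely locally free and $\mu$-\emph{stable} rather than merely a coherent sheaf. Local freeness and concentration in cohomological degree zero rest on the Ext-vanishings $\Ext^i(\sL,\sF) = 0$ for $i > 0$, which follow from the spherical twist formalism once the sign of $\chi(\sL^\vee \otimes \sF)$ is controlled through the choice of $\sL$. For $\mu$-stability, any destabilising subsheaf $\sS \subset \sE$ would either embed into $\sL^{\oplus n}$ or project to a destabilising subsheaf of $\sF$, contradicting $\mu$-stability of $\sL$ or $\sF$ respectively, provided $\mu_A(\sL) < \mu_A(\sE)$; this slope inequality can always be arranged by twisting $\sE$ by a power of an ample line bundle before running the induction, which leaves the sphericity condition $\langle v,v\rangle=-2$ invariant. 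Iterating produces the desired spherical $\mu$-stable bundle.
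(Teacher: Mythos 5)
The paper offers no proof of this statement --- it is imported verbatim as Kuleshov's Theorem~2.1 --- so your proposal has to stand on its own. Your reduction is correct as far as it goes: for $v=(r,c_1,r+\tfrac12 c_1^2-c_2)$ one has $\langle v,v\rangle = 2rc_2-(r-1)c_1^2-2r^2$, so \eqref{Eq_Arithmetic} is exactly $\langle v,v\rangle=-2$, and Serre duality plus Riemann--Roch shows that a \emph{simple} sheaf with such a Mukai vector is automatically spherical.

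The inductive step, however, has a genuine gap at its very first move: the line bundle $\sL$ with $0<n<r$, $n=-\langle v(\sL),v\rangle$, need not exist, and no preliminary twist produces one. Take $\Sigma$ with $\Pic(\Sigma)=\Z H$, $H^2=10$, and $(r,c_1,c_2)=(2,H,4)$; then \eqref{Eq_Arithmetic} holds and $v=(2,H,3)$, but for every line bundle $\sO(kH)$ one computes
\begin{equation*}
  -\langle v(\sO(kH)),v\rangle \;=\; 10k^2-10k+5 \;=\; 10k(k-1)+5 \;\geq\; 5 \;>\; r,
\end{equation*}
whereas your induction needs the value $1$. Twisting $v$ by $\sO(mH)$ only translates $k$ and returns the same set of values, so the induction cannot even start at $r=2$ in a case where the theorem is certainly true (the bundle is the restriction to $\Sigma\subset\Gr(2,5)$ of the tautological rank-$2$ bundle under the Mukai model). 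This is precisely why Kuleshov's argument --- like the Drezet--Le Potier theory on $\P^2$ that it adapts --- must assemble $\sE$ from a pair of \emph{previously constructed spherical bundles of smaller rank}, not from line bundles alone; the selection of that pair by a division-with-remainder procedure in the Mukai lattice is the real content of the proof, and it is exactly the step your proposal waves at with ``follows from the arithmetic of the Mukai lattice.'' Two secondary problems: the evaluation map associated to a basis of $\Hom(\sL,\sF)$ maps $\sL^{\oplus n}\to\sF$ and does not produce an extension with $\sL^{\oplus n}$ as a subsheaf and $\sF$ as quotient (that would be a universal extension, governed by $\Ext^1(\sF,\sL)$); and your stability dichotomy fails, since a destabilising subsheaf of the middle term of $0\to\sL^{\oplus n}\to\sE\to\sF\to 0$ with $\mu(\sL)<\mu(\sE)<\mu(\sF)$ may surject onto a full-rank subsheaf of $\sF$ that does not destabilise $\sF$ --- extensions of a stable sheaf by stable sheaves of smaller slope are not automatically $\mu$-semistable.
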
  

\begin{remark}
  By Hirzebruch--Riemann--Roch, \eqref{Eq_Arithmetic} is equivalent to $\chi(\sEnd_0(\sE)) = 0$, a necessary condition for $\sE$ to be spherical.
\end{remark}

Set 
\begin{equation*}
  U_\pm \coloneq \set{ b \in \P^1 : \Sigma_{\pm,b}~\text{is non-singular and}~\Pic(\Sigma_{\pm,b}) \iso N_\pm }.
\end{equation*}
Since  $A_\pm^\perp \subset N_\pm$ is generated by $B_\pm$ and $B_\pm^2 = -30 < -6$, for $b \in U_\pm$ the following guarantees that $\sE_{\pm,b}$ is indeed $\mu$--stable (and thus stable%
\footnote{%
  Recall
  that a torsion-free coherent sheaf $\sE$ on a projective variety $(X,\sO(1))$ is called \defined{(semi)stable} if for each torsion-free coherent subsheaf $\sF \subset \sE$ with $0 < \rk \sF < \rk \sE$ we have ($p_\sF \leq p_\sE$) $p_\sF < p_\sE$.
  Here $p_\sE$ denotes the \defined{reduced Hilbert polynomial} of $\sE$, the unique polynomial satisfying $p_\sE(m) = \chi(\sE\otimes\sO(m))/\rk \sE$ for all $m \in \Z$, and we compare polynomials using the lexicographical order of their coefficients.

  The notions of $\mu$--stability and stability are closely related in case $(X,\sO(1))$ is smooth (and thus Kähler): because $p_\sE(m) = \deg \sO(1)/n! \cdot m^n + (\mu(\sE)+ \frac12\deg(K_X))/(n-1)! \cdot m^{n-1} + \cdots$, $\mu$--stable implies stable (and semistable implies $\mu$--semistable).
}).

\begin{prop}
  \label{Prop_Stability}
  In the situation of \autoref{Thm_Kuleshov}, if the divisibilities of $r$ and $c_1$ are coprime and for all non-zero $x \in H^{1,1}(\Sigma,\Z)$ perpendicular to $c_1(A)$ we have
  \begin{equation}
    \label{Eq_Squares}
    x^2 < -\frac{r^2(r^2-1)}{2},
  \end{equation}
  then $\sE$ is $\mu$--stable.
\end{prop}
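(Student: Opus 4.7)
I would argue by contradiction. Assume $\sE$ is $\mu$-semistable but not $\mu$-stable, so there exists a saturated subsheaf $\sF \subset \sE$ with $r_F := \rk \sF \in \{1,\dots,r-1\}$ and $\mu(\sF) = \mu(\sE)$. Set $\sG := \sE/\sF$, which is torsion-free of rank $r_G := r - r_F$ by saturation, and introduce
\[
\xi := r_G\, c_1(\sF) - r_F\, c_1(\sG) \in \Pic(\Sigma).
\]
The slope equality $\mu(\sF) = \mu(\sG) = \mu(\sE)$ is equivalent to $\xi \cdot c_1(A) = 0$. Moreover, $\sF$ and $\sG$ are themselves $\mu$-semistable (a destabilising subsheaf of either pulls back or extends to one of $\sE$), so the Bogomolov inequality on the K3 surface $\Sigma$ gives $\Delta(\sF), \Delta(\sG) \ge 0$, where $\Delta(\cdot) = 2r c_2 - (r-1) c_1^2$.

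The key step is the classical discriminant identity, obtained by expanding $c_1(\sE)$ and $c_2(\sE)$ via the exact sequence $0 \to \sF \to \sE \to \sG \to 0$ and grouping terms:
\[
\Delta(\sE) \;=\; \frac{r}{r_F}\Delta(\sF) + \frac{r}{r_G}\Delta(\sG) \;-\; \frac{\xi^2}{r_F r_G}.
\]
Combined with \eqref{Eq_Arithmetic}, which reads $\Delta(\sE) = 2(r^2-1)$, and the AM--GM bound $r_F r_G \le r^2/4$, this gives the lower bound
\[
\xi^2 \;\ge\; -2\, r_F r_G (r^2-1) \;\ge\; -\tfrac{r^2(r^2-1)}{2}.
\]

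Two cases remain. If $\xi \ne 0$, then $\xi$ is a non-zero class in $\Pic(\Sigma)$ perpendicular to $c_1(A)$, and hypothesis \eqref{Eq_Squares} gives $\xi^2 < -r^2(r^2-1)/2$, contradicting the lower bound. If $\xi = 0$, then $r_G\, c_1(\sF) = r_F\, c_1(\sG)$, which combined with $c_1(\sF) + c_1(\sG) = c_1$ forces $r\, c_1(\sF) = r_F\, c_1$ in $\Pic(\Sigma)$; writing $c_1 = m c_0$ with $c_0$ primitive and $m$ the divisibility of $c_1$, the coprimality hypothesis $\gcd(r, m) = 1$ forces $r \mid r_F$, which is impossible for $1 \le r_F \le r-1$.

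The only genuine calculation is the discriminant identity; everything else combines Bogomolov, an elementary AM--GM estimate, and the divisibility hypothesis. The main subtlety — and what I expect to check most carefully — is the numerical coincidence that the bound in \eqref{Eq_Squares} is calibrated precisely to the AM--GM worst case $r_F = r_G = r/2$, so that no slack is lost when converting Bogomolov into a strict inequality via \eqref{Eq_Squares}.
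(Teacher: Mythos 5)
Your proof is correct and follows essentially the same route as the paper: you form the same class $x=\xi=\rk\sE\cdot c_1(\sF)-\rk\sF\cdot c_1(\sE)$, show it is perpendicular to $c_1(A)$, rule out $\xi\neq 0$ by the bound $\xi^2\geq-\tfrac{r^2}{4}\Delta(\sE)=-\tfrac{r^2(r^2-1)}{2}$ against \eqref{Eq_Squares}, and rule out $\xi=0$ by the coprimality of divisibilities. The only difference is that the paper quotes this bound directly from \cite{Huybrechts2010}*{Theorem 4.C.3}, whereas you rederive it from the discriminant identity, Bogomolov's inequality for $\sF$ and $\sE/\sF$, and AM--GM, which makes the argument self-contained.
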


\begin{proof}
  Suppose $\sF$ were a destabilising sheaf, i.e., a torsion-free subsheaf $\sF \subset \sE$ with $0 < \rk \sF < \rk \sE$ and $\mu(\sF) = \mu(\sE)$.
  Since $c_1(\sE)c_1(A) = \rk\sE\cdot\mu(\sE)$ (and similarly for $\sF$), $x \coloneq \rk\sE\cdot c_1(\sF) - \rk\sF\cdot c_1(\sE) \in c_1(A)^\perp$.
  The discriminant of $\sE$ is
  \begin{equation*}
    \Delta(\sE) \coloneq 2\rk \sE \cdot c_2(\sE) - (\rk \sE - 1)c_1(\sE)^2 = 2(r^2-1)
  \end{equation*}
  by \eqref{Eq_Arithmetic}.
  According to \cite[Theorem 4.C.3]{Huybrechts2010} we must have either
  \begin{equation*}
    -\frac{(\rk \sE)^2}{4} \Delta(\sE) \leq x^2,
  \end{equation*}
  which violates \eqref{Eq_Squares}, or $x = 0$.

  The latter, however, implies
  \begin{equation*}
    \rk \sE\cdot c_1(\sF) = \rk \sF\cdot c_1(\sE),
  \end{equation*}
  which is impossible because the divisibilities of $\rk \sE$ and $c_1(\sE)$ are coprime.
\end{proof}

As a consequence of this and the following, for $b \in U_\pm$ the moduli space of semistable bundles on $\Sigma_{\pm,b}$ satisfying \eqref{Eq_ChosenChernClasses} is a reduced point.

\begin{theorem}[{Mukai \cite[Theorem 6.1.6]{Huybrechts2010}}]
  \label{Thm_Mukai}
  Let $(\Sigma, A)$ be a polarised smooth $K3$ surface.
  Suppose that $\sE$ is a stable sheaf satisfying \eqref{Eq_Arithmetic} with $r=\rk\sE$, $c_1=c_1(\sE)$ and $c_2=c_2(\sE)$.
  Then $\sE$ is locally free and any other semistable sheaf satisfying the same condition must be isomorphic to $\sE$.
\end{theorem}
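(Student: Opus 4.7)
The plan is to work throughout with the Mukai vector
\[ v(\sF) := \bigl(\rk\sF,\ c_1(\sF),\ \rk\sF + \tfrac12 c_1(\sF)^2 - c_2(\sF)\bigr) \in H^{\rm even}(\Sigma,\Z)\]
and the Mukai pairing $\langle(r,c,s),(r',c',s')\rangle := c\cdot c' - rs' - r's$, under which Hirzebruch--Riemann--Roch on the $K3$ surface reads $\chi(\sF,\sG) = -\langle v(\sF), v(\sG)\rangle$. A routine substitution will identify the arithmetic relation \eqref{Eq_Arithmetic} with the single numerical fact $\langle v(\sE), v(\sE)\rangle = -2$, and this is the engine of the whole argument: the moduli space has expected dimension $\langle v,v\rangle + 2 = 0$.

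\textbf{Local freeness via the reflexive hull.} Since $\sE$ is stable it is torsion-free, so the canonical map $\sE\into\sE^{\vee\vee}$ has torsion cokernel of some finite length $\ell\geq 0$; on a smooth surface $\sE^{\vee\vee}$ is automatically locally free. A short calculation gives $v(\sE^{\vee\vee}) = v(\sE) + (0,0,\ell)$ and hence $\langle v(\sE^{\vee\vee}),v(\sE^{\vee\vee})\rangle = -2 - 2r\ell$. Stability of $\sE^{\vee\vee}$ is inherited from $\sE$ (any destabilising subsheaf of $\sE^{\vee\vee}$ would restrict to one of $\sE$), so $\Hom(\sE^{\vee\vee},\sE^{\vee\vee}) = \C$; Serre duality on $\Sigma$ gives $\Ext^2(\sE^{\vee\vee},\sE^{\vee\vee})\iso\C$ as well. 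Combining with Riemann--Roch yields
\[ 2 + 2r\ell \;=\; -\langle v(\sE^{\vee\vee}),v(\sE^{\vee\vee})\rangle \;=\; \chi(\sE^{\vee\vee},\sE^{\vee\vee}) \;=\; 2 - \dim\Ext^1(\sE^{\vee\vee},\sE^{\vee\vee}) \;\leq\; 2,\]
forcing $\ell = 0$. The same estimate, applied now directly to $\sE$, simultaneously gives $\dim\Ext^1(\sE,\sE) = 0$, so $\sE$ is a reduced isolated point of the moduli space of semistable sheaves with Mukai vector $v$.

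\textbf{Uniqueness, and the main obstacle.} Given any other semistable sheaf $\sE'$ with $v(\sE') = v$, I would analyse its Jordan--Hölder filtration into stable factors $\sE_1,\dots,\sE_k$ of the common slope, with $\sum_i v(\sE_i) = v$. The first step is to show $\sE\iso\sE_j$ for some $j$: otherwise, for every $i$, non-isomorphism between the stable sheaves $\sE$ and $\sE_i$ of the same slope forces $\Hom(\sE,\sE_i) = 0 = \Hom(\sE_i,\sE)$, hence by Serre duality $\chi(\sE,\sE_i) = -\dim\Ext^1(\sE,\sE_i)\leq 0$, and summing gives the contradiction
\[ 2 \;=\; -\langle v,v\rangle \;=\; \chi(\sE,\sE') \;=\; \sum_{i=1}^k \chi(\sE,\sE_i) \;\leq\; 0. \]
Once $\sE\iso\sE_1$ is secured, equating ranks in $v - v(\sE_1) = \sum_{i\geq 2} v(\sE_i)$ and using that each stable factor is torsion-free of positive rank forces $k=1$, so $\sE'$ is itself stable and isomorphic to $\sE$. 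Handling this strictly semistable case cleanly is where I expect the main technical care; happily, in the intended application to \autoref{Thm_Ex} the coprimality hypothesis of \autoref{Prop_Stability} already upgrades semistable to $\mu$-stable, so in that setting this last case is moot.
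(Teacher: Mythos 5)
The paper offers no proof of this statement---it is quoted verbatim from Mukai via Huybrechts--Lehn---so the only question is whether your blind reconstruction is sound. Your Mukai-vector bookkeeping is correct (\eqref{Eq_Arithmetic} is exactly $\langle v,v\rangle=-2$), and the uniqueness half is essentially right once ``same slope'' is read as ``same reduced Hilbert polynomial'': that is what the Jordan--H\"older factors of a Gieseker-semistable sheaf share, and what forces a nonzero hom between non-isomorphic stable factors to vanish. (For merely $\mu$-stable sheaves of equal slope a nonzero hom need not be an isomorphism, so the wording matters.)

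The genuine gap is in the local-freeness step, at ``stability of $\sE^{\vee\vee}$ is inherited from $\sE$''. For Gieseker stability this is false, and the parenthetical justification fails quantitatively: writing $p$ for the reduced Hilbert polynomial and $\ell$ for the colength, one has $p(\sE)=p(\sE^{\vee\vee})-\ell/r$, while a subsheaf $\sF'\subset\sE^{\vee\vee}$ of rank $r'<r$ only satisfies $p(\sF'\cap\sE)\geq p(\sF')-\ell/r'$; since $r'<r$ the correction goes the wrong way, so a destabilising subsheaf of $\sE^{\vee\vee}$ need not induce one of $\sE$. Even the weaker property your Riemann--Roch count actually uses---simplicity of $\sE^{\vee\vee}$, needed for $\hom=\dim\Ext^2=1$---is not inherited: the kernel of a general map $L_1\oplus L_2\to\sO_x$ ($L_1\not\iso L_2$ line bundles with no homs between them) is simple with non-simple double dual. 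As written the step is circular, since local freeness is precisely what would identify $\End(\sE^{\vee\vee})$ with $\End(\sE)$. Two ways out: (i) if $\sE$ is $\mu$-stable the inheritance is genuine (slope sees only $\rk$ and $c_1$, which the double dual preserves), and this covers the sheaves produced by \autoref{Prop_Stability}; but the paper also applies the theorem to sheaves known only to be Gieseker-stable via the coprimality condition \eqref{Eq_SemisimpleImpliesSimple}, so this does not suffice. (ii) The actual Mukai/Huybrechts--Lehn argument runs the inequality in the other direction: $\chi(\sE^{\vee\vee},\sE^{\vee\vee})=2+2r\ell$ plus Serre duality gives $\hom(\sE^{\vee\vee},\sE^{\vee\vee})\geq 1+r\ell$, and one then derives a contradiction between a non-scalar endomorphism of $\sE^{\vee\vee}$ and the stability and rigidity of $\sE$; this is the delicate point and needs to be supplied. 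A final small correction: in the application the strictly semistable case is not moot---the uniqueness statement for arbitrary semistable sheaves is exactly what makes the moduli space $M_b$ in \autoref{Prop_SphericalBundlesK3Fibrations} a single point---and it is \autoref{Prop_SemisimpleImpliesSimple} (coprimality of $\rk$ and $\chi$), not \autoref{Prop_Stability}, that upgrades semistable to (Gieseker-)stable there.
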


If we were able construct holomorphic vector bundles $\sE_\pm$ on $Z_\pm$ whose restrictions to the fibres $\Sigma_{\pm,b}$ with $b \in U_\pm$ agree with $\sE_{\pm,b}$ and which satisfy \eqref{Eq_NoDeformations}, then we could apply \autoref{Thm_RigidExample} and the proof of \autoref{Thm_Ex} would be complete.
To see this, note that $\infty \in U_\pm$ and thus $\sE_\pm|_{\Sigma_{\pm,\infty}}$ have the same rank, their characteristic classes are identified by $\fr^*$ (since $\fr^*B_- = B_+$ by construction) and both are $\mu$--stable.
The construction of $\sE_\pm$ is achieved using the following tool.
(Note that $\frac12 B_\pm^2 + 6 = -9$; hence, \eqref{Eq_SemisimpleImpliesSimple} holds in our situation in view of \eqref{Eq_ChosenChernClasses}.)

\begin{prop}
  \label{Prop_SphericalBundlesK3Fibrations}
  Let $f\co Z \to B$ be RDP $K3$ fibration from a projective $3$--fold $Z$ to a smooth curve $B$ and set $S \coloneq \set{ b \in B : \Sigma_b \coloneq f^{-1}(b) ~\text{is singular} }$.
  Let $(r,c_1,c_2) \in \N \times \im(\res\co H^2(Z) \to H^2(\Sigma_b)) \times \Z$ for some $b \notin S$ be such that \eqref{Eq_Arithmetic} holds and
  \begin{equation}
    \label{Eq_SemisimpleImpliesSimple}
    \gcd\(r, \frac12 c_1^2 - c_2\) = 1.
  \end{equation}
  Suppose that there is a set $U\subset B\setminus S$ whose complement is countable and for each $b \in U$ the moduli space $M_b$ of semistable bundles $\sE_b$ on $\Sigma_b$ with
  \begin{equation}
    \label{Eq_ChernClasses}
    \rk \sE_b = r, \quad c_1(\sE_b) = c_1 \qandq c_2(\sE_b) = c_2 
  \end{equation}
  consists of a single reduced point: $M_b = \set{[\sE_b]}$.
  Then there exists a holomorphic vector bundle $\sE$ over $Z$ such that, for all $b \in U$, $\sE|_{\Sigma_b} \iso \sE_b$.
  $\sE$ is spherical, i.e., $H^*(\sEnd_0(\sE)) = 0$ and unique up to twisting by a line bundle pulled-back from $B$.
\end{prop}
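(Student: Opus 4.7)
The plan is to realise $\sE$ as the pullback of a universal family on a relative Gieseker moduli space of sheaves on the fibres of $f$. A direct computation shows that \eqref{Eq_Arithmetic} is equivalent to the Mukai vector $v := (r, c_1, r + \tfrac12 c_1^2 - c_2)$ on any smooth K3 fibre being \emph{spherical}: $v^2 = -2$. With $s := \tfrac12 c_1^2 - c_2$, the coprimality hypothesis \eqref{Eq_SemisimpleImpliesSimple} is the primitivity of $v$, which together with $v^2 = -2$ forces every semistable sheaf with Mukai vector $v$ on a smooth fibre to be stable, and makes the Brauer-theoretic obstruction to the existence of a universal sheaf vanish.

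Fix a relative polarisation on $f \co Z \to B$ whose restriction to each $\Sigma_b$ agrees with that used to define $M_b$, and form the relative Simpson moduli space $\pi \co \cM \to B$ of semistable sheaves with Mukai vector $v$ on the fibres of $f$; this is projective over $B$ because $f$ is flat with Cohen--Macaulay fibres (RDP K3 surfaces are Gorenstein). By hypothesis, the assignment $b \mapsto [\sE_b]$ is a section of $\pi$ over $U$. Since $v^2 = -2$, the fibres of $\pi$ over smooth K3 fibres of $f$ are $0$-dimensional; the irreducible component $\cM'$ of $\pi^{-1}(B\setminus S)$ containing the image of this section is therefore an integral scheme proper over the smooth curve $B\setminus S$ and generically of degree $1$, hence maps isomorphically onto $B \setminus S$. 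The valuative criterion extends the inverse isomorphism uniquely to a section $\sigma \co B \to \cM$. Pulling back the universal sheaf along $\sigma$ (whose existence on $\pi^{-1}(B\setminus S)$ is guaranteed by \eqref{Eq_SemisimpleImpliesSimple}) produces a coherent sheaf $\sE$ on $Z$, locally free on $f^{-1}(B\setminus S)$ and restricting to $\sE_b$ on each $\Sigma_b$ with $b \in U$.

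The main obstacle is to verify local freeness of $\sE$ across the divisorial locus $f^{-1}(S) \subset Z$. Since $Z$ is a smooth $3$-fold, the reflexive hull $\sE^{**}$ fails to be locally free only on a finite subset $T \subset Z$. Replacing $\sE$ by $\sE^{**}$, one argues $T = \emptyset$ by examining, in a formal neighbourhood of any $t \in T$, the degeneration of the spherical bundle on nearby smooth K3 fibres: because each such neighbouring fibre contributes a single reduced point to $\cM$, the limiting sheaf on the singular K3 through $t$ is itself reflexive and the total sheaf is determined uniquely, which rules out an isolated singularity at $t$. Uniqueness of $\sE$ then follows, since any two locally free extensions determine sections of $\pi$ agreeing over the dense $U$ and hence globally. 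Finally, for sphericity the hypothesis $H^\bullet(\Sigma_b, \sEnd_0(\sE|_{\Sigma_b})) = 0$ for $b \in U$, combined with the upper semicontinuity of $b \mapsto h^q(\Sigma_b, \sEnd_0(\sE|_{\Sigma_b}))$ in the flat family $f$, forces the vanishing of the fibrewise cohomology on all of $B$; cohomology and base change yield $R^\bullet f_* \sEnd_0(\sE) = 0$, and the Leray spectral sequence then gives $H^\bullet(Z, \sEnd_0(\sE)) = 0$.
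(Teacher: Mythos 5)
Your overall strategy---the relative Simpson moduli space over $B$, the observation that \eqref{Eq_Arithmetic} says the Mukai vector is spherical, and the use of \eqref{Eq_SemisimpleImpliesSimple} both to force semistable $\Rightarrow$ stable and to kill the obstruction to a universal sheaf---is the same as the paper's. But the two steps where the real content of the proposition lies are not carried out correctly.

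First, local freeness over the singular fibres. Your argument passes to the reflexive hull $\sE^{**}$ and then asserts that the non-locally-free locus $T$ is empty ``because each neighbouring fibre contributes a single reduced point to $\cM$, the limiting sheaf \dots is itself reflexive \dots which rules out an isolated singularity at $t$.'' This is not a proof: uniqueness of the $S$-equivalence class of the limit does not prevent that limit from being a non-locally-free sheaf (limits of vector bundles on surfaces are routinely of the form $\ker(\sE' \to \sO_x)$), and reflexivity of $\sE^{**}$ as a sheaf on the $3$-fold $Z$ does not imply local freeness of its restriction to a fibre, nor does double-dualising commute with restriction to $\Sigma_b$. The paper closes exactly this gap with Thomas's result (\autoref{Thm_Thomas}): the restriction to a singular RDP fibre is stable (by \eqref{Eq_SemisimpleImpliseSimple} via \autoref{Prop_SemisimpleImpliesSimple}) with $\chi(\sEnd_0)=0$, and on an RDP $K3$ this forces local freeness; local freeness of $\sE$ itself then follows from fibrewise local freeness plus flatness. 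Without an input of this kind the RDP hypothesis is never used and the claim over $f^{-1}(S)$ is unsupported. (Relatedly, your universal sheaf is only produced over $\pi^{-1}(B\setminus S)$, so you still need Langton's theorem or an equivalent to have a flat family over all of $Z$ before any such discussion can start.)

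Second, sphericity. You argue that vanishing of $h^q(\Sigma_b,\sEnd_0(\sE|_{\Sigma_b}))$ for $b\in U$ propagates to all of $B$ by upper semicontinuity. Semicontinuity goes the wrong way: the jumping locus $\{h^q\ge 1\}$ is closed, so vanishing on the dense set $U$ is perfectly compatible with non-vanishing at the finitely many points of $B\setminus U$ --- which are precisely the points you need to control. The correct argument (as in the paper) is pointwise: for \emph{every} $b$, the restriction is stable, hence simple, so $H^0(\sEnd_0)=0$; Serre duality on the ($\text{RDP}$, hence Gorenstein with trivial dualising sheaf) $K3$ gives $H^2=0$; and $\chi(\sEnd_0)=0$, constant in the flat family, then kills $H^1$. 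Only after fibrewise vanishing on all of $B$ does the Leray/Grothendieck spectral sequence give $H^\bullet(Z,\sEnd_0(\sE))=0$.
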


\begin{remark}
  Note that by Hirzebruch--Riemann--Roch $\chi(\sE_b) = \frac12 c_1^2 - c_2 +2\rk \sE_b$, so \eqref{Eq_SemisimpleImpliesSimple} is asking that $\rk \sE_b$ and $\chi(\sE_b)$ be coprime.
\end{remark}

This result is essentially contained in Thomas' work on sheaves on $K3$ fibrations~\cite[Theorem 4.5]{Thomas2000}.
Its proof heavily relies on the following generalisation of \autoref{Thm_Mukai}.

\begin{theorem}[{\citet[Proof of Theorem 4.5]{Thomas2000}}]
  \label{Thm_Thomas}
  Let $(\Sigma,A)$ be a polarised $K3$ surface with at worst RDP singularities.
  If $\sE$ is a stable coherent sheaf on $\Sigma$ with $\chi(\sEnd_0(\sE)) = 0$, then $\sE$ is locally free.
\end{theorem}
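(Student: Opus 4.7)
The plan is a smoothing argument that reduces the assertion to Mukai's theorem on smooth $K3$ surfaces.

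\emph{Step 1 (Rigidity).} Since ADE singularities are Gorenstein and rational, $\omega_\Sigma \iso \cO_\Sigma$ and $\chi(\cO_\Sigma)=2$, and Serre duality holds on $\Sigma$. Stability of $\sE$ gives $\Hom(\sE,\sE)=\C\cdot\id$, hence $\Ext^2(\sE,\sE) \iso \C$ by Serre duality. The hypothesis $\chi(\sEnd_0(\sE))=0$ combined with $\chi(\sEnd(\sE))=\chi(\sEnd_0(\sE))+\chi(\cO_\Sigma)=2$ now forces $\dim\Ext^1(\sE,\sE)=0$. Moreover, the trace $\Ext^2(\sE,\sE)\to H^2(\cO_\Sigma)=\C$ is Serre dual to the inclusion of scalar endomorphisms $\C\cdot\id \into \Hom(\sE,\sE)$ and is therefore an isomorphism of one-dimensional spaces, so the trace-free obstruction space $\Ext^2_0(\sE,\sE)$ vanishes.

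\emph{Step 2 (Smoothing and extension).} Choose an analytic polarised smoothing $\pi\co\cS\to\Delta$ with $\cS_0=\Sigma$, $\cS_t$ a smooth polarised $K3$ for $t\neq 0$, and $\cS$ a smooth complex $3$-fold; this is possible because ADE $K3$ singularities deform to smooth $K3$s within the polarised $K3$ moduli. The vanishing of $\Ext^2_0(\sE,\sE)$ unobstructs the first-order deformation of $\sE$ along $\pi$, and $\Ext^1(\sE,\sE)=0$ promotes this to a unique formal extension, which algebraises via the quasi-projective moduli of stable sheaves over $\cS/\Delta$ to an $\cO_\cS$-module $\cE$, flat over $\Delta$, with $\cE|_{\cS_0}\iso\sE$. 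By flatness, $\chi(\sEnd_0(\cE|_{\cS_t}))=0$ for every $t$; hence for $t \neq 0$ the sheaf $\cE|_{\cS_t}$ is stable and spherical on the smooth $K3$ $\cS_t$, and Mukai's theorem gives local freeness. Consequently, the non-locally-free locus $Z$ of $\cE$ is a closed subset of $\cS_0$.

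\emph{Step 3 (Conclusion).} Flatness of $\cE$ over $\Delta$ together with the torsion-freeness of $\sE$ yields $\mathrm{depth}_{\cO_{\cS,p}}\cE_p \geq 2$ for every $p\in\cS_0$, so $\cE$ is reflexive on the smooth $3$-fold $\cS$ and $Z$ has codimension $\geq 2$ in $\cS$. Once $Z = \emptyset$ is established, Auslander-Buchsbaum on the smooth $3$-fold $\cS$ implies that $\cE$ is locally free, and restricting to $\cS_0$ yields the local freeness of $\sE$. The main obstacle is ruling out a positive-dimensional component of $Z$: one expects this to follow from the rigidity of Step 1, arguing that near a generic point of a hypothetical one-dimensional component of $Z$, a local $1$-parameter smoothing of the non-free locus would lift to a non-trivial class in $\Ext^1(\cE,\cE)$ whose fibrewise restriction to $\cS_0$ sits in $\Ext^1(\sE,\sE)=0$, a contradiction. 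This last step, a codimension-two deformation analysis comparing $\sE$ to its reflexive hull across the RDP loci, is the delicate point and the only part that uses anything beyond standard Serre-duality/Mukai inputs.
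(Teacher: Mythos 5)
First, a point of reference: the paper does not prove this statement at all --- it is quoted from Thomas \cite{Thomas2000}*{proof of Theorem 4.5} --- so there is no in-paper argument to compare with; your proposal has to stand on its own, and it does not. The decisive gap is already in Step 1. To get $\Ext^1(\sE,\sE)=0$ from $\chi(\sEnd_0(\sE))=0$ you implicitly identify the Euler characteristic of the \emph{sheaf} $\sEnd(\sE)=\sHom(\sE,\sE)$ with the alternating sum $\sum_i(-1)^i\dim\Ext^i(\sE,\sE)$ of the \emph{global} Ext groups. These agree only when the local-to-global spectral sequence $H^p(\mathscr{E}xt^q(\sE,\sE))\Rightarrow\Ext^{p+q}(\sE,\sE)$ has no $q\geq 1$ contributions, and those contributions are supported exactly on the locus where $\sE$ fails to be locally free --- the very thing to be proved --- so the deduction is circular. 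It is worse than circular at a singular point of $\Sigma$: if $\sE$ is not free at an RDP, its stalk has infinite projective dimension over the non-regular local ring, $\mathscr{E}xt^q(\sE,\sE)\neq 0$ for infinitely many $q$ (Eisenbud periodicity for hypersurface singularities), hence $\Ext^i(\sE,\sE)\neq 0$ for arbitrarily large $i$; the alternating sum is then not even finite, and the duality $\Ext^2(\sE,\sE)\iso\Hom(\sE,\sE)^*$ you invoke also needs $\sE$ to be perfect. So neither $\Ext^1(\sE,\sE)=0$ nor the vanishing of the trace-free part of $\Ext^2(\sE,\sE)$ is established, and Step 2, which consumes both, is unsupported.

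The second gap you concede yourself. Passing to a smoothing $\cS$ is the right instinct --- on the smooth $3$--fold $\cE$ is perfect and Ext-counting becomes legitimate, and this is essentially the setting Thomas works in, with the $K3$ fibration itself playing the role of $\cS$ --- but once there, the \emph{entire} content of the theorem is concentrated in showing that the non-locally-free locus $Z$ is empty. (Note that since $\cE$ is reflexive on a smooth $3$--fold, $Z$ automatically has codimension $\geq 3$, i.e.\ is a finite set of points; the positive-dimensional components you worry about do not occur, but the isolated points are precisely the hard case and are not addressed.) Your sketch that a ``local $1$--parameter smoothing of the non-free locus would lift to a class in $\Ext^1$'' is a heuristic, not an argument. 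The standard way to close this is quantitative, in the spirit of the proof of Mukai's theorem (\autoref{Thm_Mukai}): compare $\cE$ with its reflexive hull, or compute the relevant Ext groups on nearby fibres via Riemann--Roch, and show that each point of $Z$ contributes a strictly positive correction term of the form $2r\ell$ that contradicts rigidity. Without that computation the proof is incomplete exactly where the theorem has content.
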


We also use the following simple observation.
\begin{prop}
  \label{Prop_SemisimpleImpliesSimple}
  If $\sE$ is a semistable sheaf with $\rk\sE$ and $\chi(\sE)$ coprime, then $\sE$ is stable.
\end{prop}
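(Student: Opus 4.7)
The plan is to exploit the Jordan--Hölder filtration of a (Gieseker) semistable sheaf. Recall that any Gieseker semistable sheaf $\sE$ with reduced Hilbert polynomial $p_\sE := P_\sE/\rk\sE$ admits a finite filtration
\begin{equation*}
  0 = \sE_0 \subset \sE_1 \subset \cdots \subset \sE_n = \sE
\end{equation*}
whose successive quotients $\sF_i := \sE_i/\sE_{i-1}$ are stable sheaves with reduced Hilbert polynomial equal to $p_\sE$. I would invoke this fact as the starting point of the proof.

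Next I would read off, from the equality of reduced Hilbert polynomials, the identity of rational numbers
\begin{equation*}
  \frac{\chi(\sF_i)}{\rk \sF_i} = \frac{\chi(\sE)}{\rk\sE},
\end{equation*}
obtained simply by evaluating the reduced Hilbert polynomials at $0$. Clearing denominators gives $\chi(\sF_i)\cdot\rk\sE = \chi(\sE)\cdot\rk\sF_i$; since $\gcd(\rk\sE,\chi(\sE)) = 1$ by hypothesis, $\rk\sE$ must divide $\rk\sF_i$. Combined with the obvious bound $\rk\sF_i \leq \rk\sE$, this forces $\rk\sF_i = \rk\sE$ and hence $n = 1$, so $\sE = \sF_1$ is itself stable.

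I do not foresee a genuine obstacle here: the argument is essentially a one-line numerical observation, and the only subtle point is making sure one is working with Gieseker semistability (so that the Jordan--Hölder quotients share the \emph{full} reduced Hilbert polynomial, not merely the slope), which is the notion of semistability used in the moduli-theoretic statement \autoref{Prop_SphericalBundlesK3Fibrations} where this proposition is applied.
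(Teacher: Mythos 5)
Your argument is correct and rests on exactly the same numerical observation as the paper's proof: a subobject (or Jordan--Hölder factor) with the same reduced Hilbert polynomial satisfies $\chi(\sF)\rk\sE = \chi(\sE)\rk\sF$, and coprimality then forces $\rk\sE \mid \rk\sF$, which is incompatible with $0 < \rk\sF < \rk\sE$. The paper simply applies this directly to a destabilising subsheaf rather than routing through the full Jordan--Hölder filtration, but the content is identical.
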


\begin{proof}
  If $\sE$ is destabilised by $\sF \subset \sE$ with $0 < \rk \sF < \rk \sE$, then $p_\sF = p_\sE$.
  In particular, evaluating at $m = 0$ we have
  \begin{equation*}
    \rk \sE \cdot \chi(\sF) = \rk \sF \cdot \chi(\sE).
  \end{equation*}
  This contradicts $\rk\sE$ and $\chi(\sE)$ being coprime.
\end{proof}

\begin{proof}[Proof of \autoref{Prop_SphericalBundlesK3Fibrations}]
  Consider the moduli functor $\underline \bM \co \Sch_B^{\rm op} \to \Set$ which assigns to a $B$--scheme $U$ the set
  \begin{equation*}
    \underline \bM(U) \coloneq \set{ \sE ~\text{a coherent sheaf over}~ Z \times_B U ~\text{satisfying}~ (\clubsuit) }/\sim.
  \end{equation*}
  Here $(\clubsuit)$ means that $\sE$ is flat over $U$, for each $b \in U$, $\sE\otimes_{\sO_U} k(b)$ is semistable and its Hilbert polynomial $P$ agrees with that of a sheaf on a smooth fibre with characteristic classes given by \eqref{Eq_ChernClasses}.
  We write $\sE \sim \sF$ if and only if there exists a line bundle $\sL$ over $U$ such that $\sE$ and $\sF\otimes\sL$ are $S$--equivalent;
  cf.~\citet[p.~561]{Maruyama1978} and \citet[Section 4.1]{Huybrechts2010}.

  $\underline\bM$ is universally corepresented by a proper and separated $B$--scheme $\bM$, i.e., the moduli problem has a proper and separated coarse moduli space, see \citet[Section 1]{Simpson1994}.
  The fibre of $\bM$ over $b \in B$ is the coarse moduli space of semistable sheaves on $\Sigma_b$ with Hilbert polynomial $P$.

  Denote by $M$ the component of $\bM$ whose fibres over $B\setminus S$ are the coarse moduli space $M_b$ of semistable sheaves $\sE$ on $\Sigma_b$ satisfying \eqref{Eq_ChernClasses}.
  By assumption, for each $b \in U$, $M_b$ consists of a single reduced point $[\sE_b]$.
  By \eqref{Eq_SemisimpleImpliesSimple} and \autoref{Prop_SemisimpleImpliesSimple}, $\sE_b$ is stable and, hence, spherical because $\chi(\sEnd_0(\sE_b)) = 0$.
  By \autoref{Thm_Mukai} it is locally free.
  Using deformation theory, see, e.g., \citet[Section 7]{Hartshorne2010}, one can show that $M \to B$ is surjective onto a open neighbourhood of each $b \in U$ and thus to all of $B$, since it is proper.
  Using  \eqref{Eq_SemisimpleImpliesSimple} and \autoref{Prop_SemisimpleImpliesSimple} as well as \autoref{Thm_Mukai} again we see that for each $b \notin S$ the fibre $M_b$ is a reduced point.
  Since $M$ is separated, it follows that $M = B$.

  By \cite[Corollary 4.6.7]{Huybrechts2010}, \eqref{Eq_SemisimpleImpliesSimple} guarantees the existence a universal sheaf $\sE$ on $Z \times_B M = Z$.%
  \footnote{%
    Strictly speaking, the quoted result only provides the universal sheaf over $f^{-1}(B\setminus S)$;
    however, the argument of \cite[second paragraph in the proof of Theorem 4.5]{Thomas2000} shows why the argument works uniformly on $B$.
    Alternatively, the existence of the universal sheaf can be deduced from \cite[Theorem 1.21]{Simpson1994} and Tsen's theorem $H^2_{\rm et}(B,\sO^*) = 0$.}
  By flatness, for each $b \in B$, $\chi(\sE|_{\Sigma_b}) = \frac12 c_1^2 - c_2 + 2r$ and $\chi(\sEnd_0(\sE|_{\Sigma_b})) = 0$.
  From \eqref{Eq_SemisimpleImpliesSimple}, \autoref{Prop_SemisimpleImpliesSimple} and \autoref{Thm_Thomas} (resp.~\autoref{Thm_Mukai}) it follows that $\sE|_{\Sigma_b}$ is locally free and spherical for arbitrary $b \in B$.
  Therefore, $\sE$ is also locally free by \cite[Lemma 1.27]{Simpson1994} and spherical by Grothendieck's spectral sequence.

  The asserted uniqueness property follows from the fact that $\sE$ is a universal sheaf and the definition of the moduli functor.
\end{proof}

This completes the construction of the bundles $\sE_\pm$ and thus the proof of \autoref{Thm_Ex}.
Clearly, the above argument also proves the following more abstract result.

\begin{theorem}
  \label{Thm_Abstract}
  Let $(Z_\pm,\Sigma_\pm,\bomega_\pm;\fr)$ be a matched pair of framed building blocks.
  Suppose that $f_\pm \co Z_\pm \to \P^1$ are RDP $K3$ fibrations and that for all but countably many $b \in \P^1$ we have
  \begin{equation*}
    \Pic(f_\pm^{-1}(b)) \iso N_\pm \coloneq \im\(\res_\pm\co H^2(Z_\pm) \to H^2(\Sigma_\pm)\).
  \end{equation*}
  Suppose there exists a $(r,c_1,c_2) \in \N \times (N_+ \cap \fr^*N_-) \times \Z$ such that
  \begin{equation*}
    2rc_2 - (r-1) c_1^2 - 2(r^2-1) = 0    
  \end{equation*}
  and
  \begin{equation*}
    \gcd\(r,\frac12 c_1^2 - c_2\) = 1.
  \end{equation*}
  If $[\omega_{I,\pm}] \in H^2(\Sigma_\pm,\Q)$ and for all non-zero $x \in [\omega_{I,\pm}]^\perp \subset N_\pm$ we have
  \begin{equation*}
    x^2 < -\frac{r^2(r^2-1)}{2},
  \end{equation*}
  then there exists rank $r$ holomorphic vector bundles $\sE_\pm$ on $Z_\pm$ with
  \begin{equation*}
    c_1(\sE_+|_{\Sigma_+}) = \fr^*c_1(\sE_-|_{\Sigma_-}) = c_1 \qandq
    c_2(\sE_+|_{\Sigma_+}) = \fr^*c_2(\sE_-|_{\Sigma_-}) = c_2
  \end{equation*}
  satisfying the hypotheses of \autoref{Thm_RigidExample}.
\end{theorem}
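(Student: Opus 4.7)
The plan is to follow the proof of \autoref{Thm_Ex} verbatim, with the abstract arithmetic hypotheses replacing the specific numerical verifications from \autoref{Sec_Bundles}---the author already flags this when saying "the above argument also proves the following more abstract result." Set
\[
  U_\pm := \{b \in \P^1\setminus S_\pm : \Sigma_{\pm,b}~\text{is smooth and}~\Pic(\Sigma_{\pm,b}) \iso N_\pm\};
\]
by hypothesis $\P^1\setminus U_\pm$ is countable, and, after an innocuous relabelling of the distinguished fibre, we may assume $\infty \in U_\pm$, i.e., $\Sigma_\pm = \Sigma_{\pm,\infty}$ already has $\Pic(\Sigma_\pm) \iso N_\pm$.

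\textbf{Fibrewise construction.} For each $b\in U_\pm$, the arithmetic identity on $(r,c_1,c_2)$ is exactly the hypothesis of \autoref{Thm_Kuleshov}, yielding a spherical $\mu_{[\omega_{I,\pm}]}$--semistable bundle $\sE_{\pm,b}$ on $\Sigma_{\pm,b}$ with the prescribed invariants. Imitating the proof of \autoref{Prop_Stability}, any $\mu$--destabilising subsheaf $\sF\subset\sE_{\pm,b}$ produces
\[
  x := r\cdot c_1(\sF) - \rk(\sF)\cdot c_1 \in [\omega_{I,\pm}]^\perp \cap \Pic(\Sigma_{\pm,b}) \iso [\omega_{I,\pm}]^\perp \cap N_\pm.
\]
The Bogomolov--Gieseker bound \cite{Huybrechts2010}*{Theorem 4.C.3} forces either $x^2\geq -\tfrac{r^2(r^2-1)}{2}$---contradicting the hypothesis---or $x=0$. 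The residual $x=0$ case is excluded as in the proof of \autoref{Prop_SphericalBundlesK3Fibrations}: by Hirzebruch--Riemann--Roch on the K3 fibre, $\chi(\sE_{\pm,b}) = 2r + \tfrac{1}{2}c_1^2 - c_2$, so $\gcd(r,\tfrac{1}{2}c_1^2 - c_2) = 1$ is equivalent to $\gcd(r,\chi(\sE_{\pm,b})) = 1$, and \autoref{Prop_SemisimpleImpliesSimple} promotes Gieseker semistability to Gieseker stability, forbidding such an $\sF$. Hence $\sE_{\pm,b}$ is $\mu$--stable, and by \autoref{Thm_Mukai} the moduli space of semistable sheaves on $\Sigma_{\pm,b}$ with these invariants is the single reduced point $\{[\sE_{\pm,b}]\}$.

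\textbf{Globalisation.} These fibrewise data are precisely the input required by \autoref{Prop_SphericalBundlesK3Fibrations}, which produces unique locally free spherical sheaves $\sE_\pm$ on $Z_\pm$ with $\sE_\pm|_{\Sigma_{\pm,b}} \iso \sE_{\pm,b}$ for each $b \in U_\pm$. Since $\infty\in U_\pm$, the restriction $\sE_\pm|_{\Sigma_\pm}$ is $\mu$--stable and spherical; sphericality of $\sE_\pm$ itself yields the infinitesimal rigidity $H^1(Z_\pm,\sEnd_0\sE_\pm)=0$. The compatibility $c_1(\sE_+) = \fr^*c_1(\sE_-)$ is built into the assumption $c_1 \in N_+\cap\fr^*N_-$, and $c_2 \in H^4(\Sigma_\pm)\iso\Z$ matches automatically. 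Thus $\sE_\pm$ satisfies the hypotheses of \autoref{Thm_Gluing}.

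The main technical delicacy is the handling of the $x=0$ case without assuming the coprimality of the divisibilities of $r$ and $c_1$ used in \autoref{Prop_Stability}; this is precisely where one must substitute the Gieseker-stability route through \autoref{Prop_SemisimpleImpliesSimple}, which is already implicit in the proof of \autoref{Prop_SphericalBundlesK3Fibrations}, so no ingredient beyond those assembled in \autoref{Sec_Bundles} is required.
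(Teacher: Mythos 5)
Your route is the paper's own: Kuleshov's theorem fibrewise, a slope--stability argument, Mukai's uniqueness theorem, and \autoref{Prop_SphericalBundlesK3Fibrations} to globalise; the globalisation, sphericality and rigidity parts are fine. The genuine problem is the step you yourself flag as the ``main technical delicacy'': your exclusion of the residual case $x=0$ does not work. \autoref{Prop_SemisimpleImpliesSimple} and the hypothesis $\gcd(r,\tfrac12 c_1^2-c_2)=1$ live entirely on the Gieseker side. Gieseker stability forbids a proper subsheaf $\sF$ only when its reduced Hilbert polynomial is at least that of $\sE$, i.e.\ when $\mu(\sF)=\mu(\sE)$ \emph{and} $\chi(\sF)/\rk\sF\ge\chi(\sE)/\rk\sE$; a subsheaf of equal slope with strictly smaller normalised Euler characteristic is perfectly compatible with Gieseker stability. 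So Gieseker stability does not yield the $\mu$--stability that \autoref{Thm_Gluing} actually requires, and your argument leaves open the possibility that $\sE_{\pm,b}$ has a slope-destabilising subsheaf with $r\,c_1(\sF)=\rk\sF\cdot c_1$. (Note also that Kuleshov supplies only $\mu$--semistability, the weakest of the four conditions; it does not imply Gieseker semistability, so the chain ``$\mu$--semistable $\Rightarrow$ Gieseker-semistable $\Rightarrow$ Gieseker-stable'' cannot even be started from that end. The paper's order of deductions is the reverse: first $\mu$--stability via \autoref{Prop_Stability}, which then implies everything else.)

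What actually closes the case $x=0$ is that the divisibility hypothesis of \autoref{Prop_Stability} is automatic from \eqref{Eq_Arithmetic}, so that proposition applies verbatim. Indeed, \eqref{Eq_Arithmetic} gives $rc_2=(r-1)(\tfrac12c_1^2+r+1)$, and since $\gcd(r,r-1)=1$ this forces $\tfrac12 c_1^2\equiv-1\pmod r$. If some $d>1$ divided both $r$ and the divisibility of $c_1$, we could write $c_1=dy$ with $y\in N_\pm$, and, the $K3$ lattice being even, $\tfrac12c_1^2=d^2\cdot\tfrac12 y^2\equiv0\pmod d$, contradicting $\tfrac12c_1^2\equiv-1\pmod d$. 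Hence the divisibilities of $r$ and $c_1$ are coprime and $r\,c_1(\sF)=\rk\sF\cdot c_1$ with $0<\rk\sF<r$ is impossible, exactly as at the end of the proof of \autoref{Prop_Stability}. A last, smaller point: the ``innocuous relabelling'' putting $\infty$ into $U_\pm$ is not available, since $\fr$ is a hyperkähler rotation between the specific fibres $\Sigma_\pm=f_\pm^{-1}(\infty)$; one should instead arrange, as the paper does when setting up the matching in its example, that $\Sigma_\pm$ itself satisfies $\Pic(\Sigma_\pm)\iso N_\pm$.
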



\printreferences

\end{document}
